\patchcmd{\quote}{\rightmargin}{\leftmargin 2em \rightmargin}{}{}
\theoremstyle{plain}
\newtheorem{theo}{Theorem}[section]
\newtheorem{lem}[theo]{Lemma}
\newtheorem{claim}[theo]{Claim}
\newtheorem{cor}[theo]{Corollary}
\newcommand{\R}{\mathbb R}
\theoremstyle{definition}
\newtheorem{definition}{Definition}
\providecommand{\comment}[1]{}
\newcommand{\sub}{\mathrm{Sub}}
\newcommand{\PSL}{\mathrm{PSL}}
\newcommand{\RR}{\mathbb R}
\author[I. Biringer]{Ian Biringer}
\author[J. Raimbault]{Jean Raimbault}
  \address{Institut de math\'ematiques de Toulouse, Universit\'e Paul Sabatier\\ Toulouse, France}
  \email{Jean.Raimbault@math.univ-toulouse.fr}
\title{Ends of unimodular random manifolds}
\begin{document}

\maketitle

\section{Introduction}

A \emph{unimodular random $d$-manifold}, or URM, is a random element $(M,p)$ of the space $\mathcal M^d$ of all pointed  (complete, connected) Riemannian $d$-manifolds, whose law $\nu$ is a `unimodular' probability measure on $\mathcal M^d$, see Definition \ref{unimodulardef}.  Informally, the unimodularity means that for a each fixed manifold $M $, the basepoints $p \in M$ are distributed  with respect to the Riemannian volume of $M$.  As explained in detail in \cite{Abert_Biringer}, see also \S \ref{definitionssec}  below, URMs simultaneously generalize finite volume manifolds, their regular covers, randomly chosen leaves in measured foliations, and invariant random subgroups of the isometry groups of Riemannian manifolds. 
In this short paper, we  describe the topology of the space of ends of a URM.  %Specializing to surfaces of bounded curvature, our result implies an important special case of a theorem of Ghys~\cite{Ghys_feuille} on generic leaves of foliations by surfaces, and a topological classification for \emph{invariant random subgroups} of $\PSL_2\R$, as studied in \cite{bowen2012invariant,bowen2012invariantb,Eisenmanngeneric,Gelanderinvariant,Hartmanfurstenberg,Thomasinvariant}. 

\vspace{2mm}

An end of a Riemannian manifold $M$  is called \emph {finite volume}  if it has a finite volume neighborhood;  otherwise, it is \emph{infinite volume.}  The infinite volume ends of $M$  form a closed subset $\mathcal E_\infty(M) \subset \mathcal E(M)$ of the space of all ends. See \S \ref{ends}. 

\begin {theo}\label{main}
 If $(M,p)$ is a URM, then the following hold almost surely:
\begin{enumerate}
\item either $|\mathcal E_\infty(M)|=0,1,2$ or $\mathcal E_\infty(M)$  is a  Cantor set,
	\item if $M$ has a finite volume end, every infinite volume end of $M$ is a limit of finite volume ends,
\item setting the dimension $d=2$, if $M$ is a surface with genus $g(M)>0$, every infinite volume end of $M$ has infinite genus.
\end{enumerate}\end {theo}

Note that the topology of a finite volume end  of a Riemannian manifold can be completely arbitrary.  However, if $S$ is a surface with bounded curvature,  it is well-known that every finite volume end of $S$ is isolated and has a neighborhood homeomorphic to an open annulus (Lemma \ref{finvolends}). Using the classification  of infinite type surfaces, see \S \ref{ends}, Theorem~\ref {main}  implies the following:
%An \emph {invariant random subgroup} (IRS) of $G$ is a probability measure $\nu$ on $\sub_G$ that is invariant under conjugation by any element of $G$.  A simple example is the unique $G$-invariant probability measure $\nu_\Gamma$ on the conjugacy class of a lattice $\Gamma<G$; more involved examples are given in \cite[Section 12]{7samurai}.  
%IRSs were first studied by Ab\'ert-Glasner-Vir\'ag \cite {Abertkesten} for discrete $G$, see also Vershik \cite{Vershiktotally}, were introduced to Lie groups in \cite{7samurai} and have attracted much recent interest, e.g.  \cite{bowen2012invariant,bowen2012invariantb,Eisenmanngeneric,Gelanderinvariant,Hartmanfurstenberg,Thomasinvariant}.

\begin{cor}
Suppose $(S,p)$ is an  orientable unimodular random surface with bounded curvature.  Then almost surely, either 
\begin {enumerate}
	\item $S$ has finite volume and finite  topological type,
\item $S$ has infinite volume and is homeomorphic to one of  the following 12 surfaces: the cylinder, the plane,  one of the four surfaces in Figure \ref{surfacefig}, or a surface obtained by puncturing one of these six surfaces at a locally finite set of points that intersects all end neighborhoods.
\end {enumerate}
 Furthermore, all these topological types can be realized by such a URS.
\label{IRS_top_main}
\end{cor}

\begin {figure}[h]
\captionsetup{width=.9\linewidth}
\centering
\includegraphics{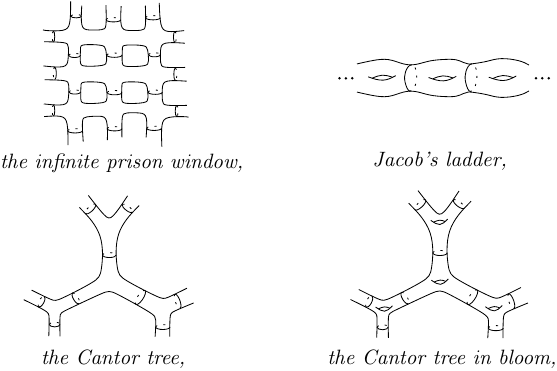}
\caption{The Cantor tree is the boundary of a regular neighborhood of a 3-valent tree properly embedded in $\mathbb R^3$.  In the spring, genus blooms near each of its vertices.  The infinite prison window is also homeomorphic to the \emph {Loch Ness monster} of  \cite{Ghys_feuille} and the \emph {one-ended Jacob's ladder}.}\label {surfacefig}
\end {figure} 

There is a natural extension of Corollary \ref{IRS_top_main} for non-orientable surfaces: the conclusion is that $S$  is  almost surely either finite volume, one of the four surfaces of Figure \ref{surfacefig} with sequences of non-orientable loops (and possibly punctures) exiting all its ends, or a cylinder or plane with \emph{both}  punctures and non-orientable loops exiting its ends. This  can be proved with the same techniques as Corollary \ref{IRS_top_main}, but we will only consider orientable surfaces here for simplicity.

\subsection{Motivation, definitions and applications}  
\label {definitionssec} As mentioned above, let 
$\mathcal M^d$ be the space of all complete, connected, pointed Riemannian $d$-manifolds $(M,p)$,  taken up to pointed isometry. We regard $\mathcal M^d$ in the \emph{smooth topology}, where two pointed manifolds  are close if their metrics are $C^{\infty}$-close on large diffeomorphic neighborhoods of their base points. See \cite {Abert_Biringer} for details. % for some large $R $, there is an embedding $\phi : B_M(p,R) \longrightarrow M'$ with $\phi(p)=p'$ that pulls back the Riemannian metric on $M'$ to a metric on $B_M(p,R)$ that is $C^{\infty}$ close to that of $M$. 
Similarly, let $\mathcal M^d_2$ be the space of  all \emph {doubly pointed} Riemannian  manifolds $(M,p,q) $, taken up to doubly pointed isometry and considered in the appropriate smooth topology. 

\begin {definition}\label {unimodulardef}
A Borel probability measure $\nu$ on $\mathcal M^d $ is \emph {unimodular} if and only if for every  nonnegative Borel function $f : \mathcal M^d_2 \longrightarrow \RR$ we have 
\begin{equation}
\int_{(M,p) \in \mathcal M^d} \int_{q\in M}  f(M,p,q) \, dvol \, d\nu= \int_{(M,p) \in \mathcal M^d} \int_{q \in M} f(M,q,p) \, dvol \, d\nu. 
\label{MTP}
\end{equation}
A \emph {unimodular random $d$-manifold} (or \emph {surface}, when $d=2$) is a random pointed  manifold $(M,p)$  whose law is a unimodular probability measure on $\mathcal M^d $.\end {definition}
Equation \ref {MTP} is usually called the \emph {mass transport principle}, or MTP.  It has its roots in foliations, but the setting in which we use it is motivated by graph theory, see Aldous-Lyons \cite{Aldousprocesses} and \cite{Benjaminirecurrence,Benjaminigroup,Haggstrominfinite}.  As an example  of a unimodular measure, suppose $M$ is a finite volume Riemannian surface, and let $\mu_M$ be the measure on $\mathcal M^d$ obtained by pushing forward the  Riemannian measure $ vol$ under  the map
\[M \longrightarrow \mathcal M^d, \ \ \ p \longmapsto (M,p).\]
 Then both sides of the MTP are  equal to the integral of $f(M,p,q)$ over $(p,q)\in M \times M$, so the measure $\mu_M $ is unimodular, \cite{Abert_Biringer}.

 For a more general example, let $X $ be a \emph{foliated space}, a  separable metrizable space $X$ that is a union of `leaves' that fit together locally as the horizontal factors in a product $\R^d \times Z$ for some transversal space $Z$.  Suppose $X$ is \emph{Riemannian}, i.e.\ the leaves all have Riemannian metrics, and these metrics vary smoothly in the transverse direction, see \cite[\S 3]{Abert_Biringer}. There is then a \emph{leaf map}  $$X \longrightarrow\mathcal M^d, \ \ x \longmapsto (L_x,x),$$ where $L_x$  is the leaf of $X$ through $x$.  
Let $\mu$ be a \emph{completely invariant}  probability measure on $X $: a measure obtained by integrating the Riemannian measures on the leaves of $X $ against some invariant transverse measure, see \cite{Candelfoliations2}.   The push forward of $\mu$ under the leaf map  is a unimodular measure on $\mathcal M ^ d$. See \cite{Abert_Biringer} for details.

 Combining this  discussion with Theorem \ref{main}, we get:

\begin {theo}[Generic leaves] Whenever $X$ is a Riemannian foliated space with a completely invariant probability measure $\mu$,  the leaf through $\mu$-almost every  point $x\in X$  satisfies the conclusions (1) -- (3) of Theorem \ref{main}.	\label {genericleaves}
\end {theo}

 There is a large amount of literature  available concerning generic leaves of Riemannian foliated spaces, see for instance  the papers by Cantwell--Conlon \cite{Cantwellendsets,CantwellendsetsII,Cantwellgeneric}, Ghys~\cite{Ghys_feuille} and \'Alvarez L\'opez--Candel \cite{Alvarezgeneric}, and also \cite{Alvarezturbulent,Bermudezlaminations,Hectorends,Walczakdynamics} and \cite[Chapter 3]{Candelfoliations2}.    At least when $X$ is compact, parts (1) and (3) of Theorem~\ref{genericleaves} follow from Ghys's 1995 work \cite{Ghys_feuille} on foliations  endowed with \emph{harmonic measures},  which generalize completely invariant measures. Ghys also uses classification of surfaces to 
prove the harmonic foliated version\footnote{Again, Ghys assumes $X $  is compact. This rules out cusps in the leaves, so that the only possible infinite type surfaces in Ghys's theorem are the cylinder, the plane and those of Figure \ref{surfacefig}.} of  Corollary \ref{IRS_top_main}.

We doubt that Theorem \ref{genericleaves} is very surprising  to those working in the field, but what is interesting to us is that its proof (even  together with the  translation from foliations to URMs in \cite{Abert_Biringer})  is extremely simple.  In some sense, the MTP captures exactly the  recurrence property necessary to prove these \emph {`if it happens somewhere, it happens everywhere'} statements efficiently.

\vspace{2mm}

 Another way to construct unimodular measures on $\mathcal M^d$  is through quotients by \emph {invariant random subgroups}. If $G$ is a  locally compact group, let $\sub_G$ be the space of closed subgroups of $G$ endowed with the Chabauty topology, see \cite[Section 2]{abert2012growth}.  
\begin {definition}
An \emph {invariant random subgroup} (IRS) of $G$ is a random closed subgroup whose law is a  conjugation invariant probability measure on $\sub_G$.  
\end {definition} 
A simple example is the IRS whose law is the unique $G$-invariant probability measure $\nu_\Gamma$ on the conjugacy class of a lattice $\Gamma<G$; more involved examples are given in \cite[Section 12]{abert2012growth}.  
IRSs were first studied by Ab\'ert-Glasner-Vir\'ag \cite {Abertkesten} for discrete $G$, see also Vershik \cite{Vershiktotally},  and were introduced to Lie groups in \cite{abert2012growth}.

In \cite[\S 2.2]{Abert_Biringer}, it is shown that when $X$ is a complete Riemannian  manifold and $G < \mathrm{Isom}(X)$  is a subgroup such that either
\begin {enumerate}
\item $G$ is unimodular and act transitively on $X $, or
\item $G$ acts freely and properly discontinuously, and $G \backslash X$ has finite volume,
\end {enumerate}
 then any IRS $\Gamma<G$ that acts freely and properly discontinuously gives a unimodular random manifold $M=\Gamma\backslash X $,  where the base point is the projection of a fixed point in $X$ in case (1), and is  the projection of a randomly chosen point from  a fundamental domain for the action $G \circlearrowright X$ in case (2).

 All the  conclusions of Theorem \ref{main} then hold for  these IRS quotients. So in particular, setting $G = \PSL_2 \R$ and $X = \mathbb H^2$,  we get the following, which was our initial motivation for writing this paper and was explained in the survey \cite{grenoble}.

\begin {cor}[Topology of IRS quotients]
 Suppose that  $\Gamma $ is  a discrete, torsion free IRS of $\PSL_2 \R$.  Then almost surely, the quotient $S= \Gamma \backslash \mathbb H^2$  is  either  finite type or is homeomorphic to one of the 12 surfaces that appear in  Theorem \ref{IRS_top_main}.
\end {cor}

 Actually, only 11 surfaces appear: IRSs of $\PSL_2 \R$  are concentrated on subgroups with full limit set, by \cite[Proposition 11.3]{abert2012growth}, so cannot be cyclic.

Note that any normal subgroup $H$  of a group $G$  can be considered as an IRS. So, by (2) above, any regular cover $\hat S$ of a (say, closed) orientable surface $S$ can be considered as a unimodular random surface, after fixing a Riemannian metric on $S$ and  randomly choosing a base point for $\hat S$  as above. In this special case, Theorem~\ref{main}~(1) is the classical theorem of Hopf \cite{Hopfenden} on ends of groups,  and the  corresponding special case of Corollary \ref{IRS_top_main}  was noticed by Grigorchuk \cite {Grigorchuktopological}.

\subsection{Acknowledgements}   We would like to thank Miklos Abert for helpful conversations. The first author was partially supported by NSF grant DMS 1611851.

\section{Ends, bounded curvature and  the proof of Corollary \ref{IRS_top_main}}
\label{ends}
 Suppose that $X$ is a topological space admitting a countable cover by compact sets, e.g. any locally compact, separable space. The \emph {space of ends} ${\mathcal E}(X)$ is then defined as the inverse limit of the system of complements of compact subsets of $X$. It is a compact, totally disconnected topological space. 

More concretely, one can construct ${\mathcal E}(X)$ as follows.  Choose a nested sequence of compact subsets $K_1\subset K_2\subset\ldots$ in $X$ such that $X = \bigcup_i K_i$. A point in ${\mathcal E}(X)$ is determined by a sequence $(C_i)$, where each $C_i$ is a component of $X\setminus K_i$ and $C_{i+1}\subset C_i$. Moreover, for every $i$ and complementary component $C_i$ we get a map ${\mathcal E}(C_i)\to{\mathcal E}(X)$, and the images of these maps are a basis for the topology. 

If $X$ is an orientable surface, one can take the $K_i$ to be subsurfaces and define the genus of an end $(C_i)$ as the limit of the genus of the $C_i$. This is either zero or infinity, and the set of ends with infinite genus is a closed set.  In fact, a surface is completely determined by the genus and topology of its ends; this was originally proven by B. Ker\'ekj\'art\'o, but a modern proof is given by I. Richards in \cite{Richards_surfaces}:

\begin {theo}[Classification of noncompact surfaces]
Suppose that $S$ and $T$ are orientable surfaces with the same genus and that there is a homeomorphism $$\phi : {\mathcal E}(S) \to {\mathcal E}(T)$$ such that for all $\xi\in{\mathcal E}(S), $ the genera of $\xi$ and $\phi(\xi)$ are the same.  Then $S$ and $ T$ are homeomorphic.
\end {theo}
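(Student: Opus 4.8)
The plan is to reduce to the classification of \emph{compact} bordered surfaces by means of an exhaustion and a back-and-forth argument; this is essentially Richards' proof \cite{Richards_surfaces}, whose structure we sketch. The only geometric input is the compact case and its relative version: a connected compact orientable surface with nonempty boundary is determined up to homeomorphism by its genus and the number of its boundary circles, and any homeomorphism between the boundary circles of two such surfaces of the same type extends to a homeomorphism between them. Everything else is combinatorial bookkeeping with ends.

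First I would replace the given exhaustions by \emph{Kerékjártó exhaustions}: nested compact connected subsurfaces $K_1\subset\operatorname{int}K_2\subset K_2\subset\cdots$ with $\bigcup_i K_i=S$, each $\partial K_i$ a finite disjoint union of circles, and such that no component of $S\setminus K_i$ has compact closure. One obtains such an exhaustion from an arbitrary one by repeatedly enlarging each $K_i$ so as to swallow its relatively compact complementary components. With it in hand, the components of $S\setminus K_i$ form a rooted, finitely branching tree $\mathcal T(S)$ under reverse inclusion, every infinite branch of which is nonempty; one has ${\mathcal E}(S)=\varprojlim_i\pi_0(S\setminus K_i)$, and the sets ${\mathcal E}(C)$ with $C$ a node of $\mathcal T(S)$ form a basis of clopen sets. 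Moreover, for each node $C$ at level $i$ the ``piece'' $P_C:=C\cap K_{i+1}$ is a connected compact bordered surface with some \emph{inner} boundary circles lying on $\partial K_i$ and some \emph{outer} ones on $\partial K_{i+1}$, and $S$ is assembled from $K_1$ and the pieces $P_C$ glued along these circles according to the tree $\mathcal T(S)$; an end has infinite genus exactly when the genus grows without bound along its branch of $\mathcal T(S)$.

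Now I would run the back-and-forth. A homeomorphism $\phi\colon{\mathcal E}(S)\to{\mathcal E}(T)$ of compact totally disconnected metric spaces carries clopen sets to clopen sets, so after passing to cofinal subsequences of the two exhaustions and re-indexing, $\phi$ is induced by a level-preserving isomorphism $\mathcal T(S)\cong\mathcal T(T)$, which, because $\phi$ respects the closed set of infinite-genus ends, matches infinite-genus branches to infinite-genus branches. Interleaving the exhaustions once more, I can also arrange that the pieces $P_C$ of $S$ and $P_{C'}$ of $T$ attached to corresponding nodes have the same genus and the same numbers of inner and outer circles: a finite ``excess'' genus is pushed into $K_1$ on each side, and along each infinite-genus branch the genus accumulated by level $i$ is made to agree. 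Granting this, the classification of compact orientable bordered surfaces lets me extend, inductively and compatibly, the already-chosen boundary homeomorphisms $\partial K_i\to\partial L_i$ over homeomorphisms $P_C\to P_{C'}$; these glue to a continuous bijection $S\to T$, which is proper since it respects the exhaustions, hence a homeomorphism.

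The main obstacle is precisely the double bookkeeping in that last step: realizing $\phi$ as an honest level-preserving tree isomorphism and \emph{simultaneously} re-bracketing the genus so that corresponding subtrees carry matching genus at every finite level. Along an infinite-genus end the handles of $S$ may pile up along its branch at a wildly different rate than those of $T$; along a branch ending at a genus-zero end both surfaces carry only finite genus near that end but possibly spread over arbitrarily many initial pieces; and the numbers of boundary circles with which a complementary component meets successive $\partial K_i$ must also be brought into line. One has to choose the common refinement of the two exhaustions, and the grouping of its pieces, so that all of these finite quantities match before the local extension step can even be applied. Once the combinatorics has been arranged the remainder is routine; for the complete argument we refer to \cite{Richards_surfaces}.
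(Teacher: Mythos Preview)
The paper does not give its own proof of this theorem: it is quoted as background, with the proof attributed to Ker\'ekj\'art\'o and Richards and a citation to \cite{Richards_surfaces}. Your proposal is a faithful outline of Richards' argument and ends by referring to the same source, so there is no divergence of approach to compare.

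One remark on the sketch itself. The sentence ``after passing to cofinal subsequences of the two exhaustions and re-indexing, $\phi$ is induced by a level-preserving isomorphism $\mathcal T(S)\cong\mathcal T(T)$'' is too optimistic as stated: for arbitrary Ker\'ekj\'art\'o exhaustions of $S$ and $T$ there is no reason a cofinal subsequence on each side will make the trees isomorphic level by level. What actually happens in Richards' argument (and what your final paragraph implicitly concedes) is that the exhaustions are \emph{constructed inductively and simultaneously}, alternating sides, so that at each stage the new compact piece on one side is chosen large enough to match the combinatorics already committed to on the other. That is the genuine back-and-forth, and it is also where the genus bookkeeping you describe is carried out. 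Since you flag exactly this as ``the main obstacle'' and defer to \cite{Richards_surfaces} for the details, the proposal is acceptable as a sketch, but be aware that the earlier sentence promises more than a mere passage to subsequences can deliver.
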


 As  mentioned in the introduction,  the classification of surfaces  combines with Theorem \ref{main} to give a  classification of topological types of unimodular random surfaces of bounded curvature.  The key lemma is the following:

\begin {lem}[Finite volume ends]\label{finvolends} Suppose that $S$ is a  complete Riemannian surface  with bounded Gaussian curvatures.  Then every finite volume end of $S$ has a neighborhood homeomorphic to an open annulus, and so is isolated in $\mathcal E(X)$.
\end {lem}
\begin {proof}
 This is a well-known  corollary of	 the `Good Choppings' theorem of Cheeger--Gromov \cite{Cheegerchopping}.  We may assume  that $S $ itself has finite volume, by arbitrarily replacing the complement of a  finite volume end neighborhood with something compact.

By \cite[Theorem 0.5]{Cheegerchopping}, $S$ is a nested union of compact subsurfaces $S_1,S_2,\ldots$ that are disjoint except along their boundaries, such that the boundary curves $\partial S_i$ have length  tending to zero and  uniformly bounded geodesic curvatures $\kappa$.
As $S$ has finite volume, we have $\mathrm{vol}(S_{i+1} \setminus S_i) \to 0$, and since curvature is bounded,
$$\chi(S_{i+1} \setminus S_i) = \int_{S_{i+1} \setminus S_i} K \, dvol  \, - \, \int_{\partial S_i} \kappa \, ds \to 0.$$
So for large $i$, we have that $S_{i+1} \setminus S_i$ is a union of annuli, and the lemma follows.
\end {proof}

To prove Corollary \ref{IRS_top_main}, then, note that if $S$ is any bounded curvature surface satisfying the conclusion of Theorem \ref{main}, then either $S$  has finite volume,
\begin{enumerate}
\item $S$ has genus zero and  $\mathcal E(S)$  consists of either $1$ or $2$ genus zero ends,
\item $S$ has infinite genus and $\mathcal E(S)$  consists of either $1$ or $2$  infinite genus ends,
\item $\mathcal E(S)$ is a Cantor set, all of genus $0$ ends,
 \item $\mathcal E(S)$ is a Cantor set, all of genus $\infty$ ends,
\end{enumerate}
or $\mathcal E(S)$ is  as described in one of the four cases above, except with  isolated genus $0$ ends accumulating on to all the previously described ends.  By classification of surfaces, this means that $S $ is as described in Theorem \ref{IRS_top_main}.

To realize all the  listed topological types,  we recall from \cite{Abert_Biringer}, see also \S \ref{definitionssec}, that any  regular cover of a finite volume surface can be regarded as a URS.   The listed surfaces are all  clearly homeomorphic to such covers.

\section{The proof of Theorem \ref{main}}
Throughout this section, let $\nu$ be a unimodular measure on $\mathcal M^d $.   For clarity, we will work directly with $\nu$ here  rather than referencing a $\nu$-random element of $\mathcal M^d$.
We will prove the three assertions of Theorem \ref{main} in turn.  Each is a quick application of the mass transport principle (see Definition \ref{unimodulardef}).
%%%%%%%%%%%%%%%%%%%%%%%%%%%%%%%%%%%%%%%%%%%%%%%%%%%%%%%%%%%%

\vspace{2mm}

First, we want to show that for $\nu$-a.e.\ $(M,p)$,  we have that $|\mathcal E_\infty(M)|=0,1,2$ or $\mathcal E_\infty(M)$  is a  Cantor set.  As $\mathcal E_\infty(M)$ is compact and totally disconnected, it suffices to show that it is perfect.  In other words, we claim:

%If $\Gamma\in\sub_G^\disc$ we will use the notation $S_\Gamma$ to denote the surface $\Gamma\bs\HH^2$ pointed at $\bar x_0$, and $B_\Gamma(R)$ to denote the ball of radius $R$ in $S_\Gamma$ around $\bar x_0$. 

\begin{lem}\label {isolatedlemma}
The following holds for $\nu$-a.e.\ $(M,p) \in \mathcal M^d$: if $M$ has an  infinite volume end  that is isolated in $\mathcal E_\infty(M)$, then $|\mathcal E_\infty(M)|\leq 2$.
\label{isolated}
\end{lem}

\begin{proof}
 Throughout the proof, we can assume that $\nu$ is concentrated on infinite volume manifolds.  
If a  pointed manifold $(M,p) $ has  at least three infinite volume ends, one of which is isolated in $\mathcal E_\infty(M)$, then there are integers $r,V>0$  such that
\begin {enumerate}
\item $M \setminus B_M(p,r)$ has at least three infinite volume connected components,
\item some component  of $ M \setminus B_M(p,r)$  has a single infinite volume end,
\item  the volume of $B_M(p,2r) $  is at most $ V$.
\end {enumerate}
 So,  it suffices to show that for each $r,V$, it is $\nu$-almost  never the case that (1) -- (3) hold.  Define a function $f : \mathcal M^d_2 \longrightarrow \{0,1\}$ by setting $f(M,p,q)=1$ if  conditions (1) -- (3)  are satisfied for $(M,p)$ and if $q$ lies in  one of the components from (2).

\begin{claim}\label {quick}
This function $f$ is Borel.
\label{f_is_Borel}
\end{claim}
\begin{proof}%[Proof of Lemma \ref{f_is_Borel}]
For each $A>0$, the condition that $ M \setminus B_M(p,r)$ has at least three components with volume bigger than $A$, one of which contains $q$, is an open condition on $(M,p,q)\in \mathcal M^d $.  So  requiring (1) and (2), and also that $q$ lies in an  infinite volume component of $ M \setminus B_M(p,r)$,  is an intersection of open conditions, and hence is Borel. Condition (3), that  $B_M(p,2r) \leq V$, is closed.
  
 Similarly,  for any $R>r$, the condition that the component of $ M \setminus B_M(p,r)$ containing $q$ contains at least two components of $M\setminus B_M(p,R )$ with infinite volume is Borel.  Therefore, the condition $f(M,p,q)=1$ is the complement of a countable union of Borel conditions, and therefore is Borel.
\end {proof}

\begin{claim}If $(M,q) \in \mathcal M^d$, then $\mathrm{vol}\{ p\in M \ | \ f(M,p,q)=1 \} \leq V$.\label {volumeestimate}
\end{claim}

\begin{proof}
It  suffices to show that $B(p_1,r) \cap B(p_2,r) \neq \emptyset$ whenever $f(M,p_i,q)=1$; since then  after fixing $p_1$,  we  always have $p_2 \in B(p_1,2r)$, so  the volume estimate follows from condition (3). 
So, suppose  the balls $B(p_1,r) $ and $B(p_2,r)$ are disjoint, and that $q$  is contained in components $C_i$  of their complements.  Then either
\begin{enumerate}
	\item $B(p_2,r)$  separates $q$ from $p_1$,  in which case  there  can only be two  infinite volume components of $M \setminus B(p_2,r)$, as in Figure \ref{components} (a), or
\item $B(p_2,r)$  does not separate $q$ from $p_1$,    in which case either $C_1$ or $C_2$  has more than one infinite volume end, as in Figure \ref{components} (b) and (c).
\end{enumerate} 
 In both cases, though, this is a contradiction.
\end {proof}
\begin {figure}[t]
\centering\includegraphics {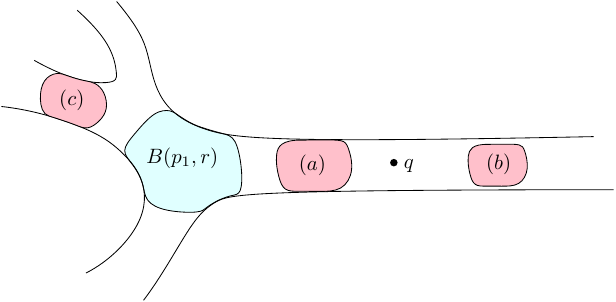}
\caption {Here, (a), (b) and (c)  indicate possible positions for $B(p_2,r)$, relative to $B(p_1,r)$.}
\label {components}
\end {figure}

The mass transport principle (MTP), see Definition \ref {unimodulardef}, now states:
$$\int_{(M,q) \in \mathcal M^d} \int_{p\in M}  f(M,p,q) \, dvol \, d\nu= \int_{(M,q) \in \mathcal M^d} \int_{p \in M} f(M,q,p) \, dvol \, d\nu. $$
By Claim \ref{volumeestimate}, the inner integral on the left side of the MTP is always at most $V$, so the left side of the MTP  is at most $V$. On the other hand, if $(M,q) \in \mathcal M^d $  satisfies (1) -- (3), the inner integral on the right side of the MTP is clearly infinite.  So,  conditions (1) -- (3) are $\nu$-almost never satisfied, which proves Lemma \ref {isolatedlemma}.\end{proof}

%for any $(M,p)\in U$. Thus the right-hand side of \eqref{MTP} is infinite for this $f$. On the other hand, given $(M,p)$ the set of $q$ such that $p$ belongs to a nc-isolated end of $M$ isolated by $M\setminus B_M(q,r)$, and the latter has at least three ends, is of volume less than $\vol B_{\HH^2}(2r)$: indeed, if $q$ is such then any $q'\in M$ with $d_M(q,q')\ge 2r$ either does not isolates the end in which $p$ is, or $M\setminus B_M(q,r)$ has at most two ends. Thus the right-hand side of \eqref{MTP} with $f$ is finite, which is the contradiction we wanted. 

 We now verify the second part of  Theorem \ref {main}.

\begin{lem}
The following holds for $\nu$-a.e.\ $(M,p)\in \mathcal M^d$: if $M$ has a  finite volume  end then every infinite volume end  of $M $ is a limit of finite volume ends.
\label{cusp_limit}
\end{lem}

\begin{proof}
 The proof is similar to that of Lemma \ref{isolated}, and we assume that the reader has understood its proof in what follows. Fixing $r,v,V>0$, the function $f : \mathcal M^d_2 \longrightarrow \{0,1\}$  we input in the MTP  is defined by setting $f(M,p,q)=1$  when \begin {enumerate}
\item $q$ belongs to an infinite-volume connected component of $M\setminus B_M(p,r)$ that has no  finite volume ends,
\item  some unbounded component of $M\setminus B_M(p,r)$  has volume at most $v$,
\item $vol \, B_M(p,r) > v$, and $vol \, B_M(p,2r) \leq V$. \end {enumerate} 
Using arguments similar to those in Claim \ref{f_is_Borel},  it is easy to show that $f$ is Borel.  It suffices, then, to show that for fixed $(M,q) $, the volume of the set of all $p\in M$ for which (1) -- (3) are satisfied is at most $V$,  for then the lemma follows from the MTP in the same way that Lemma \ref{isolated} did.

 This also works the same way that it did in Claim \ref{volumeestimate}.  Bearing condition (3)  in mind, we only have to show that if $f(M,p_1,q)=f(M,p_2,q)=1$ then $B_M(p_1,r) \cap B_M(p_2,r) \neq \emptyset$.  Assume they are  disjoint, and let $C_1,C_2$   be the  complementary components containing $q$.  We now break into two cases.
\begin {enumerate}
	\item $B_M(p_2,r) \subset C_1$.   In this case, any unbounded, finite volume component $K \subset M\setminus B_M(p_2,r)$ with  must contain $B_M(p_1,r)$, since otherwise it is  contained in $C_2$,  which by (1) has no finite volume ends.  Choosing $K$  so that $vol \,  K \leq v$, this contradicts that $vol \, B_M(p_1,r) > v$.
\item $B_M(p_2,r) \not \subset C_1$.   Suppose that $K \subset M \setminus B_M(p_1,r)$  is an unbounded component with volume at most $v$.  By volume considerations, $B_M(p_2,r) $  is not contained in $K$. Hence, $K$ and $C_1$  are  both contained in $ C_2$,  which is a contradiction since $C_2$ has no finite volume ends.\qedhere
\end {enumerate}
\end{proof}

%Note that $(2)$ is a closed condition on $\mathcal M^d$.  For each $R>r$, it is also a closed condition that $q$ belongs to a component of $B_M(p,R) \setminus B_M(p,r)$ that intersects the boundary of a cuspidal component of the $\epsilon $-thin part of $M $.  So, the condition that $q$ belongs to component of $M\setminus B_M(p,r)$ that has no cuspidal ends is a countable intersection of open conditions.  Once we know that  $q$ lies in a component of $M\setminus B_M(p,r)$ with no cusps, the condition that this component is infinite volume is open.  So, $f^{-1}(1)$ is the intersection of countably many closed and open sets, and is Borel.
%Without loss of generality, suppose that $r>20\epsilon$.  We claim that if $p,p'$ satisfy $(1)$ and $(2)$ for two cuspidal components $C,C'$ of the $\epsilon $-thin part of $M$, we have
%$$d(p,p
%')\leq 4r.$$  For if not, then $C\neq C'$ and the balls $B(p,r)$ and $B(p',r)$ are disjoint, so either $q$ and $C'$ lie in the same component of $M \setminus B(p,r)$ or $q$ and $C$ lie in the same component of $M \setminus B(p',r)$.  In either case, this contradicts (2).   
%
%Therefore, the volume of the set of all $p$ satisfying $(1 )$ and $(2)$ is at most the volume of a ball of radius $4r$ in the hyperbolic plane.

 Finally, we set the dimension $d=2$,  and prove the third part of  Theorem \ref{main}.

\begin{lem}
The following holds  for $\nu$-a.e.\ $(M,p)\in \mathcal M^2$: if $M$ has  an infinite volume end of genus zero, then $M$ has genus zero. 
\label{genus}
\end{lem}

\begin{proof}
The proof is again similar to that of Lemma \ref{isolated}  and one should understand the  proofs above before reading  further.  Fixing $r,V>0$, set $f(M,p,q)=1$ if   %Fixing $r>0$, we claim that for $\nu$-almost every $(M,p)\in\mathcal M^d $, it cannot be that $B_M (p, r) $ contains a subsurface of positive genus and $M\setminus B_M (p, r) $ has a genus zero connected component that is not an annular cusp neighborhood. 
\begin{enumerate}
\item $q$  is contained in a component of $M\setminus B_M(p,r)$  that is infinite volume  and has genus zero,
\item $B_M (p, r) $ contains a subsurface with positive genus
\item $vol \, B_M (p, 2r) \leq V$. 
\end{enumerate}
The proof that $f$ is Borel is similar to that of Claim \ref{f_is_Borel}. It is easy to see that whenever $f(M,p_1,q)=f(M,p_2,q)=1$ then $B_M(p_1,r) \cap B_M(p_2,r) \neq \emptyset$, for if $C_1, C_2$ are the   complementary components containing $q$, then conditions (1) and (2) imply that neither $B_M(p_1,r) \subset C_2$ nor $B_M(p_2,r) \subset C_1$. As before, this implies that the  set of all $p$  such that $f(M,p,q)=1$ has volume at most $V$, and the proof concludes  using the MTP in the same way as it does in Lemma \ref{isolated}.  
\end{proof}
 %Requiring the (open) ball $B_M (p, r)$ to contain a subsurface with positive genus is an open condition on $(M,p,q)\in\mathcal M^d $, as is requiring $q$ to lie in a component of $M \setminus B_M (p, r)$ that is not an annular cusp neighborhood.  Finally, requiring $q$ to lie in a \emph {genus zero} component of $M \setminus B_M (p, r)$ is the intersection of the open conditions where for each large $R $, we require the component of $B_M (p, R) \setminus B_M (p, r)$ containing $q$ to have genus zero.  In other words, $f^{-1}(1)$ is the intersection of countably many open sets, so is Borel.
\comment {The condition that $q$ lies in 

We pick a triple $(M,p,q)$ such that $f(M,p,q)=1$ and a neighbourhood $W$ of it such that for all $(M',p',q')\in W$ we have that $(M',p')$ satisfies $(\ast)_r$ and $d(p',q')>r$. We let $C'$ be the connected component of $M\setminus B_{M'}(p,r)$ containing $q$ and for $R>r$ we define
$$
U_R = \{ (M',p',q')\in W :\: C'\cap B_M(p',R) \text{ is of genus 0} \}.
$$
Then $U_R$ is open, and thus
$$
f^{-1}(1)\cap W = \bigcap_{R>r} U_R
$$
is a Borel set. }

%%%%%%%%%%%%%%%%%%%%%%%%%%%%%%%%%%%%%%%%%%%%%%%%%%%%%%%%%%%%%%%%%%%%%%%%%%%%%%%%

\bibliographystyle{plain}
\bibliography{bib,bibrefs}

\end{document}